\documentclass[a4paper,12pt,final]{amsart}
\usepackage{times,a4wide,mathrsfs,amssymb,amsmath,amsthm}

\newcommand{\C}{\mathbb{C}}
\newcommand{\ZZ}{\mathbb{Z}}

\newcommand{\LLL}{\mathbb{L}}
\newcommand{\QQ}{\mathbb{Q}}

\newcommand{\PP}{\mathbb{P}}

\newcommand{\OO}{\mathcal O}
\newcommand{\Ss}{\mathcal S}

\newcommand{\XX}{\mathcal X}

\newcommand{\VV}{\mathcal V}
\newcommand{\WW}{\mathcal W}
\newcommand{\FF}{\mathcal F}

\newcommand{\MM}{\mathcal M}
\newcommand{\Pp}{\mathcal P}

\newcommand{\pic}{\hbox{Pic}}

\newcommand{\codim}{\hbox{codim}}

\newcommand{\gr}{\hbox{Gr}}
\newcommand{\wt}{\widetilde}
\newcommand{\rom}{\romannumeral}

\DeclareMathOperator{\aut}{Aut}
\DeclareMathOperator{\ide}{id}

\DeclareMathOperator{\ima}{Im}

\newtheorem{theorem}{Theorem}[section]

\newtheorem{lemma}[theorem]{Lemma}

\newtheorem{corollary}[theorem]{Corollary}
\newtheorem{proposition}[theorem]{Proposition}

\newtheorem{remark}[theorem]{Remark}
\newtheorem{definition}[theorem]{Definition}
\newtheorem{convention}{Conventions}

\newtheorem{nonumbering}{Theorem}

\newtheorem{nonumberingc}{Corollary}

\newtheorem{nonumberingt}{Acknowledgements}

\begin{document}
\author[Robert Laterveer]
{Robert Laterveer}

\address{Institut de Recherche Math\'ematique Avanc\'ee,
CNRS -- Universit\'e 
de Strasbourg,\
7 Rue Ren\'e Des\-car\-tes, 67084 Strasbourg CEDEX,
FRANCE.}
\email{robert.laterveer@math.unistra.fr}

\title{On the Chow groups of certain cubic fourfolds}

\begin{abstract} This note is about the Chow groups of a certain family of smooth cubic fourfolds. This family is characterized by the property that each cubic fourfold $X$ in the family has an involution such that the induced involution on the Fano variety $F$ of lines in $X$ is symplectic and has a $K3$ surface $S$ in the fixed locus. The main result establishes a relation between $X$ and $S$ on the level of Chow motives. As a consequence, we can prove finite--dimensionality of the motive of certain members of the family. 
\end{abstract}

\keywords{Algebraic cycles, Chow groups, motives, cubic fourfolds, hyperk\"ahler varieties, K3 surfaces, finite--dimensional motive}
\subjclass[2010]{Primary 14C15, 14C25, 14C30.}

\maketitle

\section{Introduction}

For a smooth projective variety $X$ over $\C$, let $A^i(X):=CH^i(X)_{\QQ}$ denote the Chow groups (i.e. the groups of codimension $i$ algebraic cycles on $X$ with $\QQ$--coefficients, modulo rational equivalence). Let $A^i_{hom}(X)$ denote the subgroup of homologically trivial cycles. 

When $X\subset\PP^5(\C)$ is a smooth cubic fourfold, we have $A^i_{hom}(X)=0$ for $i\not=3$, but $A^3_{hom}(X)\not=0$ (this is related to the fact that $H^{3,1}(X)\not=0$).
The main result of this note shows that for a certain family of cubic fourfolds, the group $A^3_{hom}(X)$ is not larger than the Chow group of $0$--cycles on a $K3$ surface:

\begin{nonumbering}[=theorem \ref{main}] Let $X\subset\PP^5(\C)$ be a smooth cubic fourfold defined by an equation
  \[  f(x_0,x_1,x_2,x_3) + (x_4)^2\ell_1(x_0,\ldots,x_3) + (x_5)^2\ell_2(x_0,\ldots,x_3)+x_4x_5\ell_3(x_0,\ldots,x_3)=0\ \]
  (here $f$ has degree $3$ and $\ell_1, \ell_2,\ell_3$ are linear forms). There exists a $K3$ surface $S$ and a correspondence $\Gamma\in A^{3}(X\times S)$ inducing a split injection
   \[ \Gamma_\ast\colon\ \ \ A^3_{hom}(X)\ \hookrightarrow\ A^2_{hom}(S)\ .\]
 \end{nonumbering}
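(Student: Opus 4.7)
The approach I would take rests on exploiting the symplectic involution hinted at by the abstract. The equation for $X$ is manifestly invariant under $\sigma\colon(x_0,\ldots,x_3,x_4,x_5)\mapsto(x_0,\ldots,x_3,-x_4,-x_5)$, and its fixed locus in $X$ consists of the cubic surface $\Sigma=X\cap\{x_4=x_5=0\}\subset\PP^3$ and the line $L=\{x_0=\cdots=x_3=0\}$ (which lies on $X$ because all $\ell_i$ vanish there). The induced involution $\iota$ on the Fano variety $F$ acts trivially on $H^{2,0}(F)$, so $\iota$ is symplectic. The K3 surface $S$ sitting in the $\iota$--fixed locus is identified with the surface of $\sigma$--invariant lines joining $\Sigma$ and $L$; concretely, $S$ is the double cover of $\Sigma$ branched along $\Sigma\cap\{\ell_3^2-4\ell_1\ell_2=0\}$, for which adjunction gives $K_S=0$.

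The plan is then to construct $\Gamma$ as a composition $X\dashrightarrow F\dashrightarrow S$ at the level of correspondences. Let $P\subset X\times F$ be the universal line; restricting over $S\subset F$ yields an incidence cycle $P_S\in A^3(X\times S)$, which (possibly up to a correction supported on $X\times\{\mathrm{pt}\}$) will serve as the candidate $\Gamma$. At the level of cohomology, the Beauville--Donagi construction gives an isomorphism $[P]_\ast\colon H^4_{tr}(X)\xrightarrow{\sim} H^2_{tr}(F)(1)$; since $\iota$ is symplectic and acts trivially on $H^2_{tr}(F)$, a fixed--point argument identifies $H^2_{tr}(F)$ with $H^2_{tr}(S)$ via pull--back to the K3 surface, so $\Gamma_\ast$ becomes a nonzero multiple of an isomorphism on transcendental parts. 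To obtain a \emph{split} injection on $A^3_{hom}(X)$, I would then exhibit a transpose correspondence $\Gamma'\in A^3(S\times X)$ for which $\Gamma'\circ\Gamma$ acts as a nonzero scalar on $A^3_{hom}(X)$.

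The main obstacle will be lifting this cohomological identity to the level of rational equivalence: this is the Bloch--Srinivas style passage that typically requires either finite dimensionality of the transcendental motive of $F$ (not known in general for hyperk\"ahler fourfolds of $K3^{[2]}$--type) or a geometric argument specific to the family. The extra structure available here seems to be the decisive ingredient: linear projection from $L$ presents $X$ as birational to a conic bundle over $\PP^3$ whose discriminant has two components, $\Sigma$ and the quadric $\{\ell_3^2-4\ell_1\ell_2=0\}$, and the components of the degenerate fibers over $\Sigma$ are precisely the $\sigma$--invariant lines parameterised by $S$. This explicit realisation of $S$ as the Stein factorisation of the singular locus of the conic bundle should yield an inverse correspondence at the Chow level, upgrading the cohomological injectivity to a Chow--theoretic split.
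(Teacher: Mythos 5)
Your construction of $S$ and of the correspondence $\Gamma$ agrees with the paper's: the involution negating $x_4,x_5$, the induced symplectic involution $\iota_F$ on the Fano variety $F$, the $K3$ surface $S$ in its fixed locus, and $\Gamma={}^t\Gamma_\tau\circ\Gamma_{BD}$ (the incidence correspondence restricted over $S\subset F$) are exactly the objects used, and your cohomological conclusion (that $\Gamma_\ast$ is an isomorphism on transcendental parts, i.e.\ an isomorphism of homological motives between $(X,\pi^X_{4,1})$ and $(S,\pi^S_{2,tr})$) is precisely the paper's intermediate statement. Up to that point you are on track.

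The gap is exactly at the step you yourself flag as ``the main obstacle.'' The paper makes no use of the conic bundle structure; it upgrades the homological identity $\Psi\circ\Gamma=\Delta_X+\gamma$ (with $\gamma$ completely decomposed) to a rational equivalence by Voisin's spread method in its equivariant form (proposition \ref{voisin2}): one spreads $\Gamma$, $\Psi$ and $\gamma$ over the whole family $B$ of $\iota$--invariant cubics by a Hilbert--scheme argument, and must then verify two non--trivial hypotheses --- (\rom1) that a general pair of points off the graphs of $\ide$, $\iota_4$, $\iota_5$, $\iota$ imposes independent conditions on the invariant linear system, which is checked via very ampleness of $\OO_P(2)$ on the weighted projective space $\PP(1^4,2^2)$ (Delorme), and (\rom2) that there is no non--trivial homological relation $c\,\Delta_{X_b}+d\,\Gamma_{\iota_{X_b}}+\delta=0$, which is ruled out by Camere's computation that the trace of $\iota_F^\ast$ on $H^2(F)$ is $5$ rather than $21$. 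Your proposed substitute --- extracting a Chow--level inverse from the conic bundle obtained by projecting from the fixed line $L$ --- is not carried out and faces genuine obstacles: the discriminant is reducible ($\Sigma\cup Q$), with rank--one degenerations along $\Sigma\cap Q$ and over $\{\ell_1=\ell_2=\ell_3=0\}$, the projection admits only a $2$--section (the exceptional divisor meets each residual conic in two points), and there is no off--the--shelf Prym--type theorem for conic bundles over a threefold base producing a split injection of $A^3_{hom}$ into $A^2_{hom}$ of the discriminant double cover at the level of Chow groups --- proving such a statement is essentially of the same order of difficulty as the theorem itself, and is again the kind of assertion that in the literature is obtained either from finite--dimensionality or from a spread argument. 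As written, ``should yield an inverse correspondence at the Chow level'' asserts the conclusion rather than proving it.
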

  
  In a nutshell, the argument proving theorem \ref{main} is as follows: cubics $X$ as in theorem \ref{main} have an involution inducing a symplectic involution $\iota_F$ of the Fano variety of lines $F=F(X)$. The fixed locus of $\iota_F$ contains a $K3$ surface $S$. The inclusion $S\subset F$ being symplectic, there is a (correspondence--induced) isomorphism
   \[  \Gamma_\ast\colon\ \ \ H^{3,1}(X)\ \xrightarrow{\cong}\ H^{2,0}(S)\ .\]
   Because the cubics $X$ as in theorem \ref{main} form a large family, and the correspondence $\Gamma$ exists for the whole family, one can apply Voisin's method of ``spread'' \cite{V0}, \cite{V1}, \cite{Vo}, \cite{Vo2} to this isomorphism, and obtain a statement on the level of rational equivalence which proves theorem \ref{main}.
     
    As an application of theorem \ref{main}, we obtain some new examples of cubics with finite--dimensional motive (in the sense of Kimura/O'Sullivan \cite{Kim}, \cite{An}, \cite{J4}):
    
 \begin{nonumberingc}[=corollary \ref{findim}] Let $X$ be as in theorem \ref{main}, and   
  assume
   \[ \dim H^4(X)\cap H^{2,2}(X,\C)\ge 20 \ .\]
   Then $X$ has finite--dimensional motive.    
   \end{nonumberingc}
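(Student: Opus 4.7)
The plan is to combine theorem \ref{main} with the finite-dimensionality of K3 surfaces of high Picard rank.

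\emph{Step 1: Pushing the hypothesis to $S$.} For a smooth cubic fourfold $X$ one has $h^{3,1}(X)=h^{1,3}(X)=1$, $h^{2,2}(X)=21$ and $b_4(X)=23$. The hypothesis $\dim H^4(X)\cap H^{2,2}(X,\C) \geq 20$ thus forces the transcendental Hodge structure $H^4_{tr}(X)\subset H^4(X,\QQ)$ (the orthogonal complement, for the cup product, of the rational Hodge classes) to have rank at most $3$. The correspondence $\Gamma$ of theorem \ref{main} induces a morphism of polarized Hodge structures $\Gamma_\ast\colon H^4(X,\QQ)\to H^2(S,\QQ)(-1)$ matching $H^{3,1}(X)$ with $H^{2,0}(S)$; using simplicity of transcendental Hodge structures, this promotes to an isomorphism $H^4_{tr}(X)\cong H^2_{tr}(S)(-1)$. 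It follows that $\rho(S)\geq 19$, and by Pedrini's theorem on finite-dimensionality of K3 surfaces with Picard rank $\geq 19$ (via Shioda--Inose structure and domination by a Kummer surface), the motive $h(S)$ is finite-dimensional in the sense of Kimura--O'Sullivan.

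\emph{Step 2: Transferring to $X$.} The vanishing $A^i_{hom}(X)=0$ for $i\neq 3$, combined with Voisin's integral Hodge theorem for codimension-$2$ cycles on cubic fourfolds, furnishes a Chow--K\"unneth decomposition of $h(X)$ whose non-transcendental summands are sums of Lefschetz motives and whose transcendental summand $t(X)$ satisfies $H^\ast(t(X)) = H^4_{tr}(X)$. The split injection of theorem \ref{main} should then realize $t(X)$ as a direct summand of $h(S)(-1)$, so that finite-dimensionality of $h(S)$ descends to $t(X)$. Lefschetz motives being trivially finite-dimensional, $h(X)$ itself is finite-dimensional.

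\emph{Main obstacle.} The delicate point is upgrading the Chow-group-level splitting of theorem \ref{main} to a splitting at the level of Chow motives. This requires the left inverse of $\Gamma_\ast$ to be correspondence-induced (traceable from the proof of theorem \ref{main}) and the resulting idempotent to extend to the Chow--K\"unneth decomposition of $X$ described above. Both ingredients are reasonable for cubic fourfolds, but verifying them carefully---and in particular identifying $t(X)$ as a genuine motivic summand of $h(S)(-1)$ rather than only at the level of zero-cycles---is where the real work lies.
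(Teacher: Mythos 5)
Your proof follows the same route as the paper: use the isomorphism $H^4(X)/N^2\cong H^2_{tr}(S)$ to deduce $\dim H^2_{tr}(S)\le 3$, hence $\rho(S)\ge 19$ and finite-dimensionality of $h(S)$ by Pedrini's theorem, then transfer to $X$ through a motivic splitting. (Your detour through simplicity of transcendental Hodge structures is harmless but unnecessary: the proof of theorem \ref{main} constructs the isomorphism $H^4(X)/N^2\cong H^2_{tr}(F)\cong H^2_{tr}(S)$ directly from Beauville--Donagi and the symplectic inclusion $S\subset F$.) The ``main obstacle'' you flag would indeed be a gap if one only used the \emph{statement} of theorem \ref{main}, but it is already settled by its \emph{proof}: the spread argument does not merely split $A^3_{hom}(X)$, it produces a rational equivalence of correspondences $\Psi\circ\Gamma=\Delta_X+\gamma-\delta$ in $A^4(X\times X)$, with $\Psi\in A^3(S\times X)$, $\gamma$ supported on $\bigcup_i V_i\times W_i$ where $\dim V_i+\dim W_i=4$, and $\delta$ restricted from $\PP^5\times\PP^5$. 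Since $A_4$ of such supports is spanned by fundamental classes, $\gamma$ and $\delta$ are sums of products of cycles of complementary codimension and hence contribute only Lefschetz motives; this yields the inclusion $h(X)\subset h(S)(1)\oplus\bigoplus_j\LLL(m_j)$ in $\MM_{\rm rat}$, which is exactly how the paper phrases the transfer. So your argument is complete once ``should then realize'' is replaced by an appeal to this correspondence identity.
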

   
  For $X$ as in corollary \ref{findim}, one can also prove finite--dimensionality for the Fano varieties of lines on $X$ (remark \ref{fanotoo}). This gives new examples of 
  hyperk\"ahler fourfolds with finite--dimensional motive.

 \vskip0.6cm

\begin{convention} In this article, the word {\sl variety\/} will refer to a reduced irreducible scheme of finite type over $\C$. A {\sl subvariety\/} is a (possibly reducible) reduced subscheme which is equidimensional. 

{\bf All Chow groups will be with rational coefficients}: we will denote by $A_j(X)$ the Chow group of $j$--dimensional cycles on $X$ with $\QQ$--coefficients; for $X$ smooth of dimension $n$ the notations $A_j(X)$ and $A^{n-j}(X)$ are used interchangeably. 

The notations $A^j_{hom}(X)$, $A^j_{AJ}(X)$ will be used to indicate the subgroups of homologically trivial, resp. Abel--Jacobi trivial cycles.
For a morphism $f\colon X\to Y$, we will write $\Gamma_f\in A_\ast(X\times Y)$ for the graph of $f$.
The contravariant category of Chow motives (i.e., pure motives with respect to rational equivalence as in \cite{Sc}, \cite{MNP}) will be denoted $\MM_{\rm rat}$.



We will write $H^j(X)$ 
to indicate singular cohomology $H^j(X,\QQ)$.

Given a group $G\subset\aut(X)$ of automorphisms of $X$, we will write $A^j(X)^G$ (and $H^j(X)^G$) for the subgroup of $A^j(X)$ (resp. $H^j(X)$) invariant under $G$.
\end{convention}

\section{Preliminaries}

\subsection{Refined K\"unneth decomposition}

\begin{definition} Let $X$ be a smooth projective variety, and $h\in\pic(X)$ an ample class.
 The hard Lefschetz theorem asserts that the map
  \[  L^{n-i}\colon H^i(X)\ \to\ H^{2n-i}(X)\]
  obtained by cupping with $h^{n-i}$ is an isomorphism, for any $i< n$. One of the standard conjectures, often denoted $B(X)$, asserts that the inverse isomorphism is algebraic: we say that $B(X)$ holds if for any $i<n$, there exists a correspondence $\C_i\in A^{i}(X\times X)$ such that 
  \[ (C_i)_\ast\colon\ \ H^{2n-i}(X)\ \to\ H^i(X) \]
  is an inverse to $L^{n-i}$.
\end{definition}

\begin{remark} For more on the standard conjectures, cf. \cite{K0}, \cite{K1}. In this note, we will be using the following two facts: Any smooth hypersurface $X\subset\PP^m(\C)$ verifies $B(X)$ \cite{K0}, \cite{K1}. For any smooth cubic fourfold $X\subset\PP^5(\C)$, the Fano variety of lines $F:=F(X)$ verifies $B(F)$ (this follows from \cite[Theorem 1.1]{ChMa}, or alternatively from \cite[Corollary 6]{moi2}).
\end{remark}

\begin{remark} Let $N^\ast H^\ast$ denote the coniveau filtration on cohomology \cite{BO}. Vial \cite{V4} has introduced a variant filtration $\wt{N}^\ast H^\ast$, called the {\em niveau filtration\/}. There is an inclusion
  \[ \wt{N}^j H^i(X)\ \subset\ N^j H^i(X) \]
  for any $X$ and all $i,j$. Conjecturally, this is always an equality (this would follow from the standard conjecture $B$). If $B(X)$ holds and $j\ge {i-1\over 2}$, this inclusion is an equality \cite[]{V4}.
\end{remark}

\begin{theorem}[Vial \cite{V4}]\label{vial} Let $X$ be a smooth projective variety of dimension $n\le 5$. Assume $B(X)$ holds.
There exists a decomposition of the diagonal
  \[ \Delta_X={\displaystyle \sum_{i,j} } \pi^X_{i,j}\ \ \ \hbox{in}\  H^{2n}(X\times X)\ ,\]
  where the $\pi_{i,j}$'s are mutually orthogonal idempotents. The correspondence $\pi_{i,j}$ acts on $H^\ast(X)$ as a projector on $\gr^j_{\wt{N}} H^i(X)$. Moreover, $\pi_{i,j}$ can be chosen to factor over a variety of dimension $i-2j$ (i.e., for each $\pi_{i,j}$ there exists a smooth projective variety $Z_{i,j}$ of dimension $i-2j$, and correspondences $\Gamma_{i,j}\in A^{n-j}(Z_{i,j}\times X), \Psi_{i,j}\in A^{i-j}(X\times Z_{i,j})$ such that $\pi_{i,j}=\Gamma_{i,j}\circ \Psi_{i,j}$ in $H^{2n}(X\times X)$).
\end{theorem}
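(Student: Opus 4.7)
The plan is to refine a standard Chow--K\"unneth decomposition along the niveau filtration. First I would invoke $B(X)$: via the classical argument of Kleiman--Murre, the Lefschetz standard conjecture produces mutually orthogonal Chow--K\"unneth projectors $\pi_0,\ldots,\pi_{2n}\in A^n(X\times X)$ whose sum is $\Delta_X$ cohomologically, with $\pi_i$ acting as projection onto $H^i(X)$. This is the coarse decomposition that I now want to subdivide according to the niveau grading.

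For each $i$, unfolding the definition of $\wt{N}^\bullet H^i$ shows that the subspace $\wt{N}^j H^i(X)$ is the sum of images $\Gamma_\ast H^{i-2j}(Z)$, taken over smooth projective $Z$ of dimension $i-2j$ and $\Gamma\in A^{n-j}(Z\times X)$. Since $H^i(X)$ is finite-dimensional, I can amalgamate finitely many such triples (by disjoint union, and by products with projective spaces to homogenize dimensions) into a single pair $(Z_{i,j},\Gamma_{i,j})$ whose cohomological image realizes the full space $\wt{N}^j H^i(X)$.

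The heart of the proof is to manufacture a section $\Psi_{i,j}\in A^{i-j}(X\times Z_{i,j})$ so that $\Gamma_{i,j}\circ\Psi_{i,j}$ acts on $H^\ast(X)$ as a projector onto $\gr^j_{\wt{N}} H^i(X)$. Here $B(X)$ enters crucially: it supplies algebraic inverses to Lefschetz operators, from which a correspondence-theoretic quasi-inverse to $\Gamma_{i,j}$ can be extracted after combining with the equality $\wt{N}^j H^i=N^j H^i$ valid (under $B(X)$) when $j\geq (i-1)/2$. The hypothesis $n\leq 5$ keeps the niveau filtration short enough ($i\leq 10$ and $j\leq\lfloor i/2\rfloor$) that this inequality can be arranged, case by case, in every bidegree where $\gr^j_{\wt{N}} H^i(X)$ is nonzero; beyond dimension $5$ the bound fails in the middle bidegrees and the construction breaks down.

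Finally I would orthogonalize. Given cohomologically idempotent correspondences $\pi_{i,j}$ whose sum acts as the identity on $H^\ast(X)$, the standard inductive procedure of iteratively replacing $\pi_{i,j}$ by $\pi_{i,j}\prod_{(i',j')\neq(i,j)}(\Delta_X-\pi_{i',j'})$ at the level of cohomology yields mutually orthogonal projectors summing to $\Delta_X$ in $H^{2n}(X\times X)$, while preserving the factorization through $Z_{i,j}$. I expect the principal obstacle to lie in the quasi-inverse construction of the previous paragraph: producing $\Psi_{i,j}$ as an \emph{algebraic} correspondence, rather than merely a cohomological one, is precisely where the interplay between $B(X)$ and the restriction $n\leq 5$ is used, and constitutes the technical heart of Vial's argument.
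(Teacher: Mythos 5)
The paper does not actually prove this statement: its ``proof'' is a one--line reduction to \cite[Theorem 1]{V4}, together with the observation that a variety of dimension $\le 5$ satisfying $B(X)$ verifies the hypothesis (condition (*)) of that theorem. Your proposal instead attempts to reconstruct Vial's argument itself. The outline you give (refine Kleiman--Murre Chow--K\"unneth projectors along $\wt{N}^\bullet$, realize each $\wt{N}^jH^i$ as the image of a single correspondence $\Gamma_{i,j}$ from a variety of dimension $i-2j$, then build a section $\Psi_{i,j}$) is indeed the shape of Vial's proof, so the strategy is not wrong.

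However, as a proof it has a genuine gap, and you name it yourself: the construction of $\Psi_{i,j}$ such that $\Gamma_{i,j}\circ\Psi_{i,j}$ is a cohomological projector onto exactly $\gr^j_{\wt{N}}H^i(X)$ is asserted to follow from ``a correspondence--theoretic quasi--inverse extracted from $B(X)$'' but is never carried out; this is precisely the content of Vial's theorem, so deferring it to ``the technical heart of Vial's argument'' leaves nothing proved. Moreover, your explanation of where $n\le 5$ enters is incorrect as stated: the equality $\wt{N}^jH^i=N^jH^i$ under $B(X)$ is only known for $j\ge (i-1)/2$, and this inequality \emph{fails} in bidegrees that do occur for a fivefold (e.g. $\gr^1_{\wt{N}}H^4$, $\gr^0_{\wt{N}}H^5$, $\gr^1_{\wt{N}}H^5$), so it is not true that ``this inequality can be arranged in every bidegree where $\gr^j_{\wt{N}}H^i(X)$ is nonzero.'' Vial's condition (*) is a more delicate statement about which graded pieces need to be controlled, and verifying it for $n\le 5$ is exactly the point the paper's citation relies on. The orthogonalization and amalgamation steps in your write--up are fine, but without a genuine construction of $\Psi_{i,j}$ and a correct accounting of the low--niveau middle--degree pieces, the proposal does not establish the theorem; the honest route here is the paper's, namely to quote \cite[Theorem 1]{V4} and check its hypothesis.
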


\begin{proof} This is a special case of \cite[Theorem 1]{V4}. Indeed, as mentioned in loc. cit., varieties $X$ of dimension $\le 5$ such that $B(X)$ holds verify condition (*) of loc. cit. 
\end{proof}

 \begin{remark} If $X$ is a surface, $\pi^X_{2,0}$ is the homological realization of the projector $\pi^X_{2,tr}$ constructed on the level of Chow motives in \cite{KMP}. 
  \end{remark}

 \subsection{Spread}

  \begin{lemma}[Voisin \cite{V0}, \cite{V1}]\label{projbundle} Let $M$ be a smooth projective variety of dimension $n+1$, and $L$ a very ample line bundle on $M$. Let 
    \[ \pi\colon \XX\to B\]
    denote a family of hypersurfaces, where $B\subset\vert L\vert$ is a Zariski open.
      Let
   \[   p\colon \wt{\XX\times_B \XX}\ \to\ \XX\times_B \XX\]
   denote the blow--up of the relative diagonal. 
 Then $\wt{\XX\times_B \XX}$ is Zariski open in $V$, where $V$ is a projective bundle over $\wt{M\times M}$, the blow--up of $M\times M$ along the diagonal.
   \end{lemma}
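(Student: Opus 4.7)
The plan is to exhibit $V$ as the projectivization $\PP(\KK)$ of a kernel subbundle $\KK \subset H^0(M,L)\otimes \OO_{\wt{M\times M}}$, and then identify $\wt{\XX\times_B\XX}$ with the preimage of $B\subset \PP H^0(M,L)$ inside the embedded copy of $V$ in $\wt{M\times M}\times \PP H^0(M,L)$.

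First I would construct a surjective evaluation morphism
\[
\mathrm{ev}\colon H^0(M,L)\otimes \OO_{\wt{M\times M}}\ \twoheadrightarrow\ \EE
\]
onto a rank-$2$ locally free sheaf $\EE$ on $\wt{M\times M}$. Off the exceptional divisor $E$, set $\EE=\sigma^\ast(p_1^\ast L\oplus p_2^\ast L)$ with the evaluation $s\mapsto (s(x),s(x'))$; surjectivity holds since very ample $L$ separates any two distinct points of $M$. Along $E$ the two components of the naive map become equal and rank drops, so one twists one summand by $\OO(-E)$ and divides by a local equation of $E$, producing a morphism whose fiber over a point $(x,[v])\in E$ (with $v\in T_xM$) records the first-order jet $(s(x),ds_x(v))\in L_x\oplus (L_x\otimes T_x^\ast M)(v)$. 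Surjectivity there uses that very ample $L$ separates tangent vectors. Then $\KK:=\ker(\mathrm{ev})$ is locally free of rank $h^0(M,L)-2$, and $V:=\PP(\KK)$ is a projective bundle over $\wt{M\times M}$; the inclusion $\KK\hookrightarrow H^0(M,L)\otimes\OO_{\wt{M\times M}}$ induces a closed embedding $V\hookrightarrow \wt{M\times M}\times \PP H^0(M,L)$.

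Finally I would identify $\wt{\XX\times_B\XX}$ with $V_B:=V\cap (\wt{M\times M}\times B)$, which is open in $V$ because $B$ is open in $\PP H^0(M,L)$. Away from $E$, a point of $V_B$ is a triple $((x,x'),b)$ with $x\neq x'$ and $s_b(x)=s_b(x')=0$, which is exactly $\XX\times_B\XX\setminus \Delta_{\XX/B}$. On $E$, a point of $V_B$ is $((x,[v]),b)$ with $s_b(x)=0$ and $ds_{b,x}(v)=0$, i.e.\ $v\in T_xX_b$. A direct normal-bundle computation shows that at a point $(x,x,b)\in\Delta_{\XX/B}$, the normal space of $\Delta_{\XX/B}$ in $\XX\times_B\XX$ is canonically $T_xX_b$ (project $(u,u',v_b)\mapsto u-u'$), so the fiber of $\wt{\XX\times_B\XX}\to\XX\times_B\XX$ over $(x,x,b)$ is $\PP(T_xX_b)$, matching exactly the fiber of $V_B$ over $(x,[v])\in E$. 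Gluing the two descriptions yields the desired open embedding.

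The main obstacle is the construction of $\EE$ along the exceptional divisor: one has to keep track of twists so that the modified evaluation map extends to a genuine surjection onto a rank-$2$ locally free sheaf. This is the step where very ampleness of $L$ (separating points \emph{and} tangent vectors) is indispensable; once it is set up correctly, the identification of $\wt{\XX\times_B\XX}$ as an open inside $V$ reduces to comparing fibers on either side of the exceptional divisor, which is routine.
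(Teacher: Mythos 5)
Your construction is exactly the one the paper sketches: the paper defines $V$ as the incidence variety $\{((x,y,z),\sigma):\sigma\vert_z=0\}\subset\wt{M\times M}\times\vert L\vert$ and notes that very ampleness makes it a projective bundle, which is precisely your $\PP(\KK)$ for $\KK=\ker\bigl(H^0(M,L)\otimes\OO_{\wt{M\times M}}\to\EE\bigr)$ with $\EE$ the rank-$2$ bundle of first-order data along length-$2$ subschemes. Your fleshing out of the evaluation map along the exceptional divisor and the fiberwise identification of $\wt{\XX\times_B\XX}$ with the preimage of $B$ is a correct and complete version of the same argument.
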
 
  
  \begin{proof} This is \cite[Proof of Proposition 3.13]{V0} or \cite[Lemma 1.3]{V1}. The idea is to define $V$ as
   \[  V:=\Bigl\{ \bigl((x,y,z),\sigma\bigr) \ \vert\ \sigma\vert_z=0\Bigr\}\ \ \subset\ \wt{M\times M}\times \vert L\vert\ .\]
   The very ampleness assumption ensures $V\to\wt{M\times M}$ is a projective bundle.
    \end{proof}

  This is used in the following key proposition: 
   
     \begin{proposition}[Voisin \cite{V1}]\label{voisin1} Assumptions as in lemma \ref{projbundle}. Assume moreover $M$ has trivial Chow groups. Let $R\in A^n(V)_{}$. Suppose that for all $b\in B$ one has
    \[ H^n(X_b)_{prim}\not=0\ \ \ \ 
  \hbox{and}\ \ \ \ 
     R\vert_{\wt{X_b\times X_b}}=0\ \ \in H^{2n}(\wt{X_b\times X_b})\ .\]
   Then there exists $\gamma\in A^n(M\times M)_{}$ such that
    \[     (p_b)_\ast \bigl(R\vert_{\wt{X_b\times X_b}}\bigr)= \gamma\vert_{X_b\times X_b}  \ \ \in A^{n}({X_b\times X_b})_{}\]  
    for all $b\in B$. 
   (Here $p_b$ denotes the restriction of $p$ to $\wt{X_b\times X_b}$, which is the blow--up of $X_b\times X_b$ along the diagonal.)
    \end{proposition}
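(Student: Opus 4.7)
The plan is to apply the projective bundle formula to $q\colon V\to\wt{M\times M}$ from lemma~\ref{projbundle}, observe that the tautological class restricts to zero on each fiber $\wt{X_b\times X_b}$, and conclude by a careful base-change computation along the blow-down $p\colon \wt{M\times M}\to M\times M$.

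Setting $\tau:=c_1(\OO_V(1))$, the projective bundle formula yields
\[
R \;=\; \sum_{k\ge 0}\tau^k\cdot q^*\beta_k, \qquad \beta_k\in A^{n-k}(\wt{M\times M}).
\]
For each $b\in B$, the embedding $\iota_b\colon \wt{X_b\times X_b}\hookrightarrow V$ realizes $\wt{X_b\times X_b}$ as the fiber of the second projection $V\subset\wt{M\times M}\times|L|\to|L|$ over $\sigma_b\in|L|$. Because $\OO_V(1)$ is the restriction of $\OO_{|L|}(1)$ (as follows from the tautological inclusion in lemma~\ref{projbundle}) and the second projection is constant on $\iota_b(\wt{X_b\times X_b})$, one has $\iota_b^*\tau=0$ in $A^1(\wt{X_b\times X_b})$. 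Only the $k=0$ summand then survives the restriction:
\[
R|_{\wt{X_b\times X_b}} \;=\; j_b^*\beta_0 \qquad \text{in } A^n(\wt{X_b\times X_b}),
\]
where $j_b:=q\circ\iota_b\colon\wt{X_b\times X_b}\hookrightarrow \wt{M\times M}$ is the natural inclusion.

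Write $k_b\colon X_b\times X_b\hookrightarrow M\times M$ for the inclusion. The blow-up formula decomposes $\beta_0=p^*\alpha_0+\varepsilon$, with $\alpha_0\in A^n(M\times M)$ and $\varepsilon$ supported on the exceptional divisor $E=\PP(N_{\Delta_M/M\times M})$ of $p$. The pullback part is handled by the identity $p\circ j_b=k_b\circ p_b$ together with $(p_b)_*p_b^*=\ide$ for the birational morphism $p_b$, giving $(p_b)_*j_b^*p^*\alpha_0=k_b^*\alpha_0$. The exceptional part $(p_b)_*j_b^*\varepsilon$ is analyzed using the restriction of the blow-up formula from $\wt{M\times M}$ to $\wt{X_b\times X_b}$: because $A^*(M)$ is trivial, the class $\varepsilon$ is a $\QQ$-linear combination of explicit universal cycles on $E$, and its contribution after $(p_b)_*\circ j_b^*$ is the $k_b$-restriction of a universal class on $M\times M$. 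Setting $\gamma\in A^n(M\times M)$ to be the sum of $\alpha_0$ and this exceptional contribution, one obtains the equality $(p_b)_*\bigl(R|_{\wt{X_b\times X_b}}\bigr)=\gamma|_{X_b\times X_b}$.

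The main technical obstacle lies in the analysis of the exceptional term $\varepsilon$: because $X_b\times X_b$ meets the blow-up center $\Delta_M$ with excess intersection along $\Delta_{X_b}$, proper base change does not apply on the nose, and one must invoke the excess intersection formula. It is here that the cohomological hypothesis $R|_{\wt{X_b\times X_b}}=0$ in $H^{2n}$ together with $H^n(X_b)_{prim}\ne 0$ enter: via the triviality of Chow groups of $M$ and $\wt{M\times M}$ they force the $b$-dependent corrections coming from $\varepsilon$ to assemble into a single universal class on $M\times M$, independent of $b$.
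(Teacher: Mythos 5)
The paper does not actually prove this proposition: it is quoted verbatim from Voisin, the ``proof'' being the citation \cite[Proposition 1.6]{V1}. So your attempt has to be measured against Voisin's argument. Its first three steps do match that argument: the projective bundle formula on $V$, the vanishing of $\tau$ on $\wt{X_b\times X_b}$ (which lies over the single point $\sigma_b\in|L|$), the blow-up decomposition $\beta_0=p^*\alpha_0+\varepsilon$, and the treatment of the $p^*\alpha_0$ part via $p\circ j_b=k_b\circ p_b$ and $(p_b)_*p_b^*=\ide$ are all correct.

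The gap is in the treatment of $\varepsilon$, which is the heart of the proposition. First, you misdiagnose the difficulty: the relevant intersection is $\wt{X_b\times X_b}\cap E$ inside $\wt{M\times M}$, and this is \emph{proper} (it is $E_b=\PP(T_{X_b})$, of the expected dimension $2n-1$ inside the divisor $E=\PP(T_M)$), so there is no excess-intersection obstruction to computing $j_b^*i_{E*}$. The real problem is what comes out: writing $\varepsilon=i_{E*}(\delta)$ with $\delta=\sum_i\xi^i\pi^*\mu_{n-1-i}$ (using triviality of $A^*(M)$), one finds $(p_b)_*j_b^*\varepsilon=\lambda\,[\Delta_{X_b}]$ for a constant $\lambda\in\QQ$ independent of $b$ (only the top power of $\xi$ survives the pushforward $\PP(T_{X_b})\to\Delta_{X_b}$). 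The class $[\Delta_{X_b}]\in A^n(X_b\times X_b)$ is precisely \emph{not} of the form $\gamma|_{X_b\times X_b}$ with $\gamma\in A^n(M\times M)$ in general, so your assertion in the third paragraph that the exceptional contribution ``is the $k_b$-restriction of a universal class on $M\times M$'' is false as stated, and the vague closing claim that the hypotheses ``force the corrections to assemble into a universal class'' skips the one step that makes the proposition true. That step is: since $R|_{\wt{X_b\times X_b}}=0$ in $H^{2n}$, one gets $k_b^*\alpha_0+\lambda[\Delta_{X_b}]=0$ in $H^{2n}(X_b\times X_b)$; restrictions from $M\times M$ act as zero on $H^n(X_b)_{prim}$ (because $M$ has trivial Chow groups, such correspondences are sums of products of classes restricted from $M$), whereas $\lambda[\Delta_{X_b}]$ acts as $\lambda\cdot\ide$; the hypothesis $H^n(X_b)_{prim}\neq0$ then forces $\lambda=0$. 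Without this argument the conclusion fails (a nonzero multiple of the diagonal would remain), so the essential content of the proposition is left unproved in your write-up.
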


\begin{proof} This is \cite[Proposition 1.6]{V1}.
\end{proof}

 The following is an equivariant version of proposition \ref{voisin1}:
 
  \begin{proposition}[Voisin \cite{V1}]\label{voisin2} Let $M$ and $L$ be as in proposition \ref{voisin1}. Let $G\subset\aut(M)$ be a finite group. Assume the following:
  
  \noindent
  (\rom1) The linear system $\vert L\vert^G:=\PP\bigl( H^0(M,L)^G\bigr)$ has no base--points, and the locus of points in $\wt{M\times M}$ parametrizing triples $(x,y,z)$ such that the length $2$ subscheme $z$ imposes only one condition on $\vert L\vert^G$ is contained in the union of (proper transforms of) graphs of non--trivial elements of $G$, plus some loci of codimension $>n+1$.
  
  \noindent
  (\rom2) Let $B\subset\vert L\vert^G$ be the open parametrizing smooth hypersurfaces, and let $X_b\subset M$ be a hypersurface for $b\in B$ general. There is no non--trivial relation
   \[ {\displaystyle\sum_{g\in G}} c_g \Gamma_g +\gamma=0\ \ \ \hbox{in}\ H^{2n}(X_b\times X_b)\ ,\]
   where $\gamma$ is a cycle in $\ima\bigl( A^n(M\times M)\to A^n(X_b\times X_b)\bigr)$.
   
   Let $R\in A^n(\XX\times_B \XX)$ be such that
     \[  R\vert_{{X_b\times X_b}}=0\ \ \in H^{2n}({X_b\times X_b})\ \ \ \forall b\in B\ .\]
    Then there exists $\gamma\in A^n(M\times M)_{}$ such that
    \[     R\vert_{{X_b\times X_b}}= \gamma\vert_{X_b\times X_b}  \ \ \in A^{n}({X_b\times X_b})\ \ \ \forall b\in B\ .\]  
  \end{proposition}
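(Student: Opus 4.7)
The strategy is to adapt Voisin's proof of proposition \ref{voisin1} by replacing the linear system $\vert L\vert$ by its $G$-invariant subsystem $\vert L\vert^G$, and then controlling the extra contributions coming from the fact that the relevant incidence variety is no longer a projective bundle over the whole of $\wt{M\times M}$.

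First I would form the $G$-equivariant incidence variety
\[ V^G := \bigl\{((x,y,z),\sigma)\in \wt{M\times M}\times \vert L\vert^G \ :\ \sigma\vert_z=0\bigr\}, \]
with first projection $\pi^G\colon V^G\to\wt{M\times M}$ of fibre $\PP\bigl(H^0(M,L\otimes\mathcal I_z)^G\bigr)$. Exactly as in lemma \ref{projbundle}, $\wt{\XX\times_B\XX}$ sits inside $V^G$ as a Zariski open subset. The map $\pi^G$ is a projective bundle precisely over the locus where $z$ imposes two independent conditions on $\vert L\vert^G$; by hypothesis (\rom1) the complement of this locus lies in $\bigcup_{g\ne e}D_g \cup Z$, where $D_g$ is the proper transform of the graph $\Gamma_g\subset M\times M$ of $g\in G$ and $\codim_{\wt{M\times M}}Z>n+1$. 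On the open $\UU := \wt{M\times M}\setminus(\bigcup_{g\ne e}D_g\cup Z)$, the restriction $\pi^G|_{\UU}$ is a genuine projective bundle.

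Next I would spread the cycle. Pull back $R$ to $\tilde R\in A^n(\wt{\XX\times_B\XX})$ via the blow-up map, then use the localization sequence to extend it to a class $\bar R\in A^n(V^G)$. On $V^G|_{\UU}$ the projective bundle formula gives a decomposition
\[ \bar R|_{V^G|_{\UU}} = \sum_{i\ge 0} h^i \cdot (\pi^G|_{\UU})^\ast \alpha_i, \qquad \alpha_i\in A^{n-i}(\UU), \]
where $h$ is the relative hyperplane class. Restricting to a fibre $\wt{X_b\times X_b}$ and pushing down by the blow-up map $p_b\colon\wt{X_b\times X_b}\to X_b\times X_b$, the bundle part contributes (just as in the proof of proposition \ref{voisin1}, using that $M$ has trivial Chow groups) the restriction of a single class $\gamma\in A^n(M\times M)$ to $X_b\times X_b$. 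The contribution coming from $\pi^G{}^{-1}(Z)$ vanishes in $A^n(X_b\times X_b)$ by the codimension bound $\codim_{\wt{M\times M}}Z>n+1$, while the contribution coming from each $\pi^G{}^{-1}(D_g)$ is supported on $\Gamma_g\cap(X_b\times X_b)$, hence of the form $c_g\Gamma_g$ for some rational constant $c_g$. Altogether,
\[ R|_{X_b\times X_b} \ =\ \gamma|_{X_b\times X_b} + \sum_{g\in G\setminus\{e\}} c_g \Gamma_g \quad \text{in }\ A^n(X_b\times X_b). \]

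Finally, I would invoke hypothesis (\rom2). Passing to $H^{2n}(X_b\times X_b)$ and using the assumption that $R|_{X_b\times X_b}=0$ in cohomology, the displayed identity becomes a relation
\[ 0\ =\ \gamma|_{X_b\times X_b} + \sum_{g\in G\setminus\{e\}} c_g\Gamma_g \quad \text{in }\ H^{2n}(X_b\times X_b). \]
Since $\gamma$ is pulled back from $A^n(M\times M)$, hypothesis (\rom2) forces every $c_g$ to vanish; substituting back into the Chow-level identity yields $R|_{X_b\times X_b} = \gamma|_{X_b\times X_b}$, as required. The main technical obstacle I anticipate is justifying rigorously that each $D_g$ contributes exactly a single rational multiple of $\Gamma_g$, with a coefficient $c_g$ independent of $b\in B$: this needs a local analysis (or partial resolution) of $V^G$ along $\pi^G{}^{-1}(D_g)$ together with an excess-intersection computation identifying the class arising from the degeneration of the bundle structure there.
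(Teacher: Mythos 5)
Your proposal is correct and takes essentially the same route as the paper's (which is itself only a sketch of Voisin's argument): one decomposes $R\vert_{X_b\times X_b}$ into a class restricted from $A^n(M\times M)$ plus rational multiples of the graphs $\Gamma_g$, and then hypothesis (\rom2) together with the homological triviality of $R\vert_{X_b\times X_b}$ forces the graph coefficients to vanish. The only difference is packaging --- the paper, following Voisin, blows up the $D_g$ and the codimension $>n+1$ loci to obtain a genuine projective bundle $V^\prime\to M^\prime$ rather than working over the open complement via localization --- and your worry about $c_g$ being independent of $b$ is immaterial, since hypothesis (\rom2) is applied fibre by fibre.
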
 
  
 \begin{proof} This is not stated verbatim in \cite{V1}, but it is contained in the proof of \cite[Proposition 3.1 and Theorem 3.3]{V1}. We briefly review the argument.
 One considers
   \[  V:=\Bigl\{ \bigl((x,y,z),\sigma\bigr) \ \vert\ \sigma\vert_z=0\Bigr\}\ \ \subset\ \wt{M\times M}\times \vert L\vert^G\ .\]   
   The problem is that this is no longer a projective bundle over $\wt{M\times M}$. However, as explained in the proof of \cite[Theorem 3.3]{V1}, hypothesis (\rom1) ensures that one 
   can obtain a projective bundle after blowing up the graphs $\Gamma_g, g\in G$ plus some loci of codimension $>n+1$. Let $M^\prime\to\wt{M\times M}$ denote the result of these blow--ups, and let $V^\prime\to M^\prime$ denote the projective bundle obtained by base--changing. 

Analyzing the situation as in \cite[Proof of Theorem 3.3]{V1}, one obtains
   \[ R\vert_{X_b\times X_b} =R_0\vert_{X_b\times X_b}+ {\displaystyle\sum_{g\in G}} \lambda_g \Gamma_g\ \ \ \hbox{in}\ A^n(X_b\times X_b) \ ,\]
   where $R_0\in A^n(M\times M)$ and $\lambda_g\in\QQ$ (this is \cite[Equation (15)]{V1}).
   By assumption, $R\vert_{X_b\times X_b}$ is homologically trivial. Using hypothesis (\rom2), this implies that all $\lambda_g$ have to be $0$.   
     \end{proof}

\section{Main result}

\begin{theorem}\label{main} Let $X\subset\PP^5(\C)$ be a smooth cubic fourfold defined by an equation
  \[  f(x_0,x_1,x_2,x_3) + (x_4)^2\ell_1(x_0,\ldots,x_3) + (x_5)^2\ell_2(x_0,\ldots,x_3)+x_4x_5\ell_3(x_0,\ldots,x_3)=0\ \]
  (here $f$ has degree $3$ and $\ell_1, \ell_2,\ell_3$ are linear forms). There exists a $K3$ surface $S$ and a correspondence $\Gamma\in A^{3}(X\times S)$ inducing a split injection
   \[ \Gamma_\ast\colon\ \ \ A^3_{hom}(X)\ \hookrightarrow\ A^2_{hom}(S)\ .\]
\end{theorem}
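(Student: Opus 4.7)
The hypothesis on the equation is exactly that $X$ is preserved by the involution
\[ \iota\colon (x_0\colon\cdots\colon x_5)\ \mapsto\ (x_0\colon\cdots\colon x_3\colon -x_4\colon -x_5) \]
of $\PP^5$. Computing $\iota^\ast$ on the residue generator of $H^{3,1}(X)$ shows $\iota^\ast=\ide$ there; via the Beauville--Donagi isomorphism $H^{3,1}(X)\cong H^{2,0}(F)$, the induced involution $\iota_F$ of $F=F(X)$ is therefore symplectic. The $\iota$-fixed locus of $X$ consists of the cubic surface $Y=\{f=0\}\subset\PP^3$ and the disjoint line $L=\{x_0=\cdots=x_3=0\}\subset X$. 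I would identify $S$ as the space of lines $\ell\subset X$ meeting both $Y$ and $L$. A direct parameter count realises $S$ as an $\iota_F$-fixed smooth surface, presented as a double cover of $Y$ branched along the plane sextic $Y\cap\{\ell_3^2-4\ell_1\ell_2=0\}$; Riemann--Hurwitz then gives $K_S=0$, so $S$ is $K3$.

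Let $P\subset F\times X$ be the universal line. I define $\Gamma\in A^{3}(X\times S)$ as the transpose of the restriction of $P$ to $S\subset F$. Since $S\hookrightarrow F$ is symplectic, pullback yields $H^{2,0}(F)\xrightarrow{\sim}H^{2,0}(S)$, and composition with Beauville--Donagi gives the desired $\Gamma_\ast\colon H^{3,1}(X)\xrightarrow{\sim} H^{2,0}(S)$. Two observations are worth recording: (a) $\iota$ acts trivially on $S$ and $\Gamma$ is $(\iota\times\ide_S)$-invariant, so $\Gamma_\ast\circ\iota^\ast=\Gamma_\ast$; (b) since $\iota^\ast=\ide$ on $H^{3,1}(X)$ and $H^{4}_{\mathrm{tr}}(X)$ is a simple sub-Hodge structure of $K3$ type, $\iota^\ast=\ide$ on all of $H^{4}_{\mathrm{tr}}(X)$.

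The main work is to upgrade the Hodge-theoretic isomorphism to a split injection on Chow groups via Voisin's spread method. Theorem \ref{vial} applies to $X$ (as $B(X)$ holds and $\dim X=4$) and supplies a refined K\"unneth projector $\pi^{X}_{4,0}\in A^{4}(X\times X)$ onto $\gr^{0}_{\wt N}H^{4}(X)\supset H^{3,1}(X)$. Let $\lambda\in\QQ^\ast$ be the (unique, using $\dim H^{3,1}(X)=1$) scalar with $({}^{t}\Gamma\circ\Gamma)_\ast=\lambda\cdot\ide$ on $H^{3,1}(X)$, and form
\[ R := \pi^{X}_{4,0}\circ\bigl({}^{t}\Gamma\circ\Gamma-\lambda\,\pi^{X}_{4,0}\bigr)\circ\pi^{X}_{4,0}\in A^{4}(X\times X)\ , \]
which is homologically trivial by construction. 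All ingredients (Fano variety, $K3$ surface $S$, universal line, refined K\"unneth projector) deform across the family $\XX\to B\subset|\OO_{\PP^5}(3)|^{\langle\iota\rangle}$, so $R$ extends to a relative cycle in $A^{4}(\XX\times_{B}\XX)$. Applying proposition \ref{voisin2} with $(M,L,G)=(\PP^{5},\OO(3),\langle\iota\rangle)$ yields $\gamma\in A^{4}(\PP^{5}\times\PP^{5})$ and $c\in\QQ$ with
\[ R|_{X_b\times X_b}=\gamma|_{X_b\times X_b}+c\,\Gamma_{\iota}|_{X_b\times X_b}\quad\hbox{in } A^{4}(X_b\times X_b)\ . \]
On $A^{3}_{hom}(X_b)$ the term $\gamma$ acts as zero (trivial Chow groups of $\PP^{5}$) and $\Gamma_\iota$ acts as $\iota^\ast$; combined with $\Gamma_\ast\iota^\ast=\Gamma_\ast$ and $(\pi^{X}_{4,0})_\ast=\ide$ on $A^{3}_{hom}(X)$ (via Vial's refined decomposition for cubic fourfolds), this rearranges to $({}^{t}\Gamma\circ\Gamma)_\ast=\mu\cdot\ide$ on $A^{3}_{hom}(X)$ for some $\mu\neq 0$, so $\mu^{-1}\,{}^{t}\Gamma$ splits $\Gamma_\ast$.

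The main obstacle is verifying hypotheses (\rom1) and (\rom2) of proposition \ref{voisin2} for $(M,L,G)=(\PP^{5},\OO(3),\langle\iota\rangle)$. Condition (\rom1), about which length-two subschemes of $\PP^{5}$ fail to impose independent conditions on the linear system spanned by $\{x_ix_jx_k\}_{i,j,k\leq 3}$ and $\{x_{4}^{2}x_i,\,x_{5}^{2}x_i,\,x_{4}x_{5}x_i\}_{i\leq 3}$, is a direct monomial computation showing such subschemes lie on $\Gamma_{\iota}$ up to a codimension-$>5$ locus. Condition (\rom2), the cohomological non-triviality of $\{\Delta_X,\Gamma_\iota\}$ modulo cycles from $\PP^{5}\times\PP^{5}$, is the subtler step: the transcendental component follows from pairing with a class in $H^{4}_{\mathrm{tr}}(X_b)$ (forcing $c_1+c_2=0$ since $\gamma$ vanishes on transcendentals and $\Delta_X,\Gamma_\iota$ both act as $\ide$ there), while ruling out the residual relation $\Delta_X-\Gamma_\iota\equiv\gamma$ requires exhibiting an $\iota$-anti-invariant algebraic class on which $\gamma$ cannot reproduce the action $\ide-\iota^\ast=2\cdot\ide$, a finite-dimensional linear-algebra check for generic $X$ in the family.
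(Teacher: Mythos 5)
Your overall strategy coincides with the paper's: the same involution $\iota$, the same $K3$ surface $S$ (the fixed surface of the symplectic involution $\iota_F$ on $F=F(X)$ --- your explicit presentation of $S$ as a double cover of the cubic surface $\{f=0\}$ branched in $\{\ell_3^2-4\ell_1\ell_2=0\}$ is a nice supplement to the paper, which simply cites Camere and Fu for the existence of $S$), the same correspondence ${}^t\Gamma_\tau\circ\Gamma_{BD}$, and the same equivariant spread argument via proposition \ref{voisin2}. However, two steps of your proposal have genuine gaps.

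First, the construction of the homologically trivial cycle $R$. You need $({}^t\Gamma\circ\Gamma)_\ast$ to equal $\lambda\cdot\ide$ on all of $\gr^0_{\wt N}H^4(X)$ for some $\lambda\in\QQ$. Knowing that it acts by a scalar on the one--dimensional space $H^{3,1}(X)$ does not give this: that scalar is an element of the endomorphism field of the (irreducible) transcendental Hodge structure, and it need not be rational; if it is irrational, then ${}^t\Gamma\circ\Gamma-\lambda\,\pi^X_{4,0}$ is not homologically trivial for \emph{any} rational $\lambda$, and your $R$ cannot be fed into the spread machine. The paper avoids this entirely: since $B(X)$ and $B(S)$ hold, the Hodge--theoretic isomorphism $H^4(X)/N^2\cong H^2_{tr}(S)$ upgrades to an isomorphism of homological motives, and one chooses a correspondence $\Psi\in A^3(S\times X)$ inducing the inverse; the cycle that gets spread is $\Psi\circ\Gamma-\Delta_X-\gamma$. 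You should replace $\lambda^{-1}\,{}^t\Gamma$ by such a $\Psi$ (or else prove that the endomorphism field is $\QQ$ for the very general member and specialize, which is extra work).

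Second, the verification of hypothesis (\rom2). Once the transcendental part forces the relation into the form $\Delta_X-\Gamma_\iota\equiv\delta$ with $\delta$ coming from $\PP^5\times\PP^5$, letting it act on $H^4(X,\C)$ shows $\iota^\ast=\ide$ on all of $H^4_{prim}(X,\C)$ --- with \emph{complex} coefficients. So what must be exhibited is an anti--invariant class in $H^{2,2}(X,\C)$, not an ``anti--invariant algebraic class'': for generic $X$ in the family the rational $(2,2)$--classes reduce to $\QQ h^2$, which is invariant, so there is no algebraic class to test against and your proposed ``linear--algebra check'' has nothing to check. This is the one genuinely computational input of the proof: the paper derives the contradiction from Camere's trace formula, $\mathrm{tr}\bigl(\iota_F^\ast\vert_{\gr^1_FH^2(F,\C)}\bigr)=5\neq 21$, transported back to $X$ by Beauville--Donagi. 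Relatedly, in hypothesis (\rom1) the locus of length--two subschemes imposing only one condition on $\vert\OO(3)\vert^G$ contains the graphs of the partial involutions $x_4\mapsto -x_4$ and $x_5\mapsto -x_5$, which are \emph{not} elements of $G$; the paper must (and does, via the weighted projective space $\PP(1^4,2^2)$ and an explicit invariant cubic) check that generic points of these graphs still impose two conditions. Your ``direct monomial computation'' needs to address these graphs explicitly rather than assert that everything lies on $\Gamma_\iota$.
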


\begin{proof} Let us consider the involution 
  \[  \begin{split} \iota\colon  \PP^5 \ &\to\ \PP^5\ ,\\
                         [x_0:x_1:x_2:x_3:x_4:x_5]\ &\mapsto\  [x_0:x_1:x_2:x_3: -x_4: -x_5]\ .\\
                      \end{split}\]
  The family of cubic fourfolds $X$ as in theorem \ref{main} is exactly the family of smooth cubic fourfolds invariant under $\iota$ (this was observed in \cite[Section 7]{Cam}, and also in \cite{LFu3}, where this family appears as ``Family V-(1)'' in the classification table of \cite[Theorem 0.1]{LFu3}).    
 Let us denote by
 \[ \iota_X\colon\ \ X\ \to\ X \]
 the involution of $X$ induced by $\iota$.
 
 Let $F:=F(X)$ denote the Fano variety parametrizing lines contained in $X$. The variety $F$ is a hyperk\"ahler variety \cite{BD}. The involution
  \[ \iota_F\colon\ \ F\ \to\ F \]
 induced by $\iota_X$ is symplectic \cite[Section 7]{Cam}, \cite[Theorem 0.1]{LFu3}. The fixed locus of $\iota_F$ consists of $28$ isolated points and a $K3$ surface $S\subset F$ \cite[Section 7]{Cam}, \cite[Section 4]{LFu3}. 
 The involution $\iota_F$ being symplectic, the surface $S\subset F$ is a {\em symplectic subvariety\/}, i.e. the inclusion $\tau\colon S\to F$ induces an isomorphism
   \[ \tau^\ast\colon\ \ H^{2,0}(F)\ \xrightarrow{\cong}\ H^{2,0}(S)\ .\]
As is readily seen, this implies there is also an isomorphism
 \begin{equation}\label{triso}  \tau^\ast\colon\ \ H^{2}_{tr}(F)\ \xrightarrow{\cong}\ H^{2}_{tr}(S)\ ,\end{equation}
 where $H^2_{tr}()\subset H^2()$ denotes the smallest Hodge--substructure containing $H^{2,0}()$. 
 Let $\Gamma_{BD}$ be the correspondence inducing the Beauville--Donagi isomorphism
  \begin{equation}\label{bdiso} (\Gamma_{BD})_\ast\colon\ \ H^4(X)\ \xrightarrow{\cong}\ H^2(F)\ \end{equation}
  \cite{BD}. (That is, let $P\subset X\times F$ denote the incidence variety, with morphisms $p\colon P\to F$, $q\colon P\to X$. Then $\Gamma_{BD}:= \Gamma_p\circ {}^t \Gamma_q\in A^{5}(X\times F)$.)
 
 Let us define a correspondence
  \[ \Gamma:=  {}^t \Gamma_\tau\circ \Gamma_{BD}\ \ \ \in A^{3}(X\times S)\ .\]
   Combining isomorphisms (\ref{triso}) and (\ref{bdiso}), we obtain an isomorphism
   \[ \Gamma_\ast\colon\ \ H^4(X)/N^2\ \xrightarrow{\cong}\ H^2_{tr}(F)\ \xrightarrow\ H^2_{tr}(S)\ .\]
   A bit more formally, this implies there is an isomorphism of homological motives
   \begin{equation}\label{motiso} \Gamma\colon\ \ (X,\pi^X_{4,1},0)\ \xrightarrow{\cong}\ (S,\pi^S_{2,tr},0)\ \ \ \hbox{in}\ \MM_{\rm hom}\ .\end{equation}
   Here, $\pi^X_{4,1}=\pi^X_4-\pi^X_{4,2}$ is a projector on $H^4(X)/N^2$; this exists thanks to theorem \ref{vial}. The projector $\pi^S_{2,tr}$ is the projector on $H^2_{tr}(S)$ constructed in \cite{KMP}. Let $\Psi\in A^{3}(S\times X)$ be a correspondence inducing an inverse to the isomorphism (\ref{motiso}). This means that we have
   \[ (\Psi\circ \Gamma)_\ast=\ide\colon\ \ \ H^4(X)/N^2\ \to\ H^4(X)/N^2\ ,\]
   which means that there is a homological equivalence of cycles
   \begin{equation}\label{homeq}  \Psi\circ\Gamma\circ \pi_4^X  =\pi_4^X+\gamma_1\ \ \ \hbox{in}\ H^8(X\times X)\ ,\end{equation}
   where $\gamma_1\in A^4(X\times X)$ is some cycle supported on $V\times V\subset X\times X$, where $V\subset X$ is a codimension $2$ closed subvariety (this is because $\gamma_1$ is supported on the support of $\pi^X_{4,2}$, which is supported on $V\times V$ as indicated, by theorem \ref{vial}).
   
   As $X\subset\PP^5$ is a hypersurface, the only interesting K\"unneth component is $\pi^X_4$. That is, we can write
   \[ \Delta_X =\pi^X_4 +\gamma_2 \ \ \ \hbox{in}\ H^8(X\times X)\ ,\]   
   where $\gamma_2$ is a ``completely decomposed'' cycle, i.e. a cycle with support on $\cup_i V_i\times W_i\subset X\times X$, where $\dim V_i +\dim W_i=4$. Plugging this in equation (\ref{homeq}), we obtain a homological equivalence of cycles
   \begin{equation}\label{homeq2}  \Psi\circ\Gamma =\Delta_X +\gamma \ \ \ \hbox{in}\ H^8(X\times X)\ ,\end{equation}
   where $\gamma$ is a ``completely decomposed'' cycle in the above sense.
   
   We now proceed to upgrade the homological equivalence (\ref{homeq2}) to a rational equivalence. This can be done thanks to the work of Voisin on the Bloch/Hodge equivalence \cite{V0}, \cite{V1}, using the technique of ``spread'' of algebraic cycles in good families.
   
 Following the approach of \cite{V0}, \cite{V1}, we put the above construction in family. We define
   \[ \pi\colon\ \ \XX\ \to\ B \]
  to be the family of all smooth cubic fourfolds given by an equation as in theorem \ref{main}. (That is, we let $G\subset\aut(\PP^5)$ be the order $2$ group generated by the involution $u$, and we define
    \[ B\ \subset\ \Bigl(\PP H^0\bigl(\PP^5,\OO_{\PP^5}(3)\bigr)\Bigr)^G \]
    as the open subset parametrizing smooth $G$--invariant cubics.)  We will write $X_b:=\pi^{-1}(b)$ for the fibre over $b\in B$. We also define families
    \[ \FF\ \to\ B\ ,\ \ \ \Ss\ \to\ B \]
   of Fano varieties of lines, resp. of $K3$ surfaces. (That is, $\Ss\subset\FF$ is the fixed locus of the involution of $\FF$ induced by $\iota$.) We will write $F_b$ and $S_b$ for the fibre over $b\in B$.

The correspondence $\Gamma$ constructed above readily extends to this relative setting: 
 
  \begin{lemma}\label{relgamma} There exists a relative correspondence $\Gamma\in A^3(\XX\times_B \Ss)$, such that for all $b\in B$, the restriction
    \[ \Gamma_b:= \Gamma\vert_{X_b\times S_b}\ \ \ \in A^3(X_b\times S_b) \]
    induces the isomorphism
    \[ \Gamma_b\colon\ \  (X_b,\pi^{X_b}_{4,1},0)\ \xrightarrow{\cong}\ (S_b,\pi^{S_b}_{2,tr},0)\ \ \ \hbox{in}\ \MM_{\rm hom}   \]
    as in (\ref{motiso}).
   \end{lemma} 
   
  \begin{proof} Let $\Pp\subset \XX\times_B \FF$ denote the incidence variety, with projections $p\colon \Pp\to \FF$, $q\colon \Pp\to \XX$. Let $\tau$ denote the inclusion morphism $\Ss\to \FF$. We define
   \[ \Gamma:= {}^t \Gamma_\tau\circ  \Gamma_p\circ {}^t \Gamma_q\in A^{3}(\XX\times_B \Ss)\ . \]
 (For composition of relative correspondences in the setting of smooth quasi--projective families that are smooth over a base $B$, cf.   
   \cite{CH}, \cite{GHM}, \cite{NS}, \cite{DM}, \cite[8.1.2]{MNP}.) 
    \end{proof}
   
 The correspondences $\Psi$ and $\gamma$ also extend to the relative setting:
 
 \begin{lemma}\label{allrel} There exist subvarieties $\VV_i, \WW_i\subset \XX$ with $\codim(\VV_i)+\codim(\WW_i)=4$, and
 relative correspondences
  \[ \Psi\ \ \in A^3(\Ss\times_B \XX)\ , \ \ \ \gamma\ \ \in A^4(\XX\times_B \XX)\ ,\]
  where $\gamma$ is supported on $\cup_i \VV_i\times_B \WW_i$, and such that for all $b\in B$, the restrictions
  \[ \Psi_b:= \Psi\vert_{S_b\times X_b}\ \ \in A^3(S_b\times X_b)\ ,\ \ \ \gamma_b:= \gamma\vert_{X_b\times X_b}\ \ \in A^4(X_b\times X_b) \]
  verify the equality
  \[  \Psi_b\circ\Gamma_b =\Delta_{X_b} +\gamma_b \ \ \ \hbox{in}\ H^8(X_b\times X_b) \]
  as in (\ref{homeq2}).
  \end{lemma}  
  
  \begin{proof} The statement is different, but this is really the same Hilbert schemes argument as \cite[Proposition 3.7]{V0}, \cite[Proposition 4.25]{Vo}. 
  
  Let $\Gamma\in A^3(\XX\times_B \Ss)$ be the relative correspondence of lemma \ref{relgamma}, and let $\Delta_\XX\in A^4(\XX\times_B \XX)$ be the relative diagonal. By what we have said above, for each $b\in B$ there exist subvarieties $V_{b,i}, W_{b,i}\subset X_b$ (with $\dim (V_{b,i})+\dim(W_{b,i})=4$), and a cycle $\gamma_b$ supported on 
    \[\cup_i V_{b,i}\times W_{B,i}\subset X_b\times X_b\ ,\] 
    and a cycle $\Psi_b\in A^3(S_b\times X_b)$, such that there is equality
  \begin{equation}\label{split}  \Psi_b\circ \Gamma_b = \Delta_\XX\vert_{X_b\times X_b} +\gamma_b\ \ \ \hbox{in}\ H^8(X_b\times X_b)\ .\end{equation}
  
  The point is that the data of 
   all the
   $(b,V_{b,i}, W_{b,i}, \gamma_b,\Psi_b)$ that are solutions of the equality (\ref{split})
   can be encoded by a countable number of algebraic varieties $p_j\colon M_j\to B$, with universal objects 
   \[  \VV_{i,j}\to M_j\ , \ \ \ \WW_{i,j}\to M_j\ ,\ \ \  \gamma_j\to M_j\ ,\ \ \ \Psi_j\to M_j\ \]
   (where $\VV_{i,j}, \WW_{i,j}\subset \XX_{M_j}$, and $\gamma_j$ is a cycle supported on $\cup_i \VV_{i,j}\times_{M_j}\WW_{i,j}$, and $\Psi_j\in A^3(\Ss\times_{M_j} \XX)$),
       with the property that 
   for $m\in M_j$ and $b=p_j(m)\in B$, we have
     \[  \begin{split}   &(\gamma_j)\vert_{X_b\times X_b}=  \gamma_b\ \ \ \hbox{in}\ H^8(X_b\times X_b)\ ,\\
                                & (\Psi_j)\vert_{S_b\times X_b}= \Psi_b\ \ \ \hbox{in}\ H^6(S_b\times X_b)\ .\\
                                \end{split}\]
     By what we have said above, the union of the $M_j$ dominate $B$. Since there is a countable number of $M_j$, one of the $M_j$ (say $M_0$) must dominate $B$. Taking hyperplane sections, we may assume $M_0\to B$ is generically finite (say of degree $d$). Projecting the cycles $\gamma_0$ and $\Psi_0$  to $\XX\times_B \XX$, resp. to $\Ss\times_B \XX$, and then dividing by $d$, we have obtained cycles $\gamma$ and $\Psi$ as requested.
  \end{proof}
   
  Lemma \ref{allrel} can be succinctly restated as follows: the relative correspondence
    \[ R:= \Psi\circ \Gamma - \Delta_{\XX} - \gamma\ \ \ \in A^4(\XX\times_B \XX) \]
    has the property that for all $b\in B$, the restriction is homologically trivial:
    \[ R\vert_{X_b\times X_b}\ \ \ \in A^4_{hom}(X_b\times X_b)\ \ \ \forall b\in B\ .\]
    
    Applying theorem \ref{voisin2} to $R$ (this is possible in view of proposition \ref{OK} below), we find that 
    \begin{equation}\label{rateq} (R +\delta)\vert_{X_b\times X_b}=0\ \ \ \hbox{in}\ A^4(X_b\times X_b)\ \ \ \forall b\in B\ ,\end{equation}
    where $\delta$ is some cycle  
       \[  \delta\in \ima \Bigl(  A^4(\PP^5\times \PP^5\times B)\ \to\ A^4(\XX\times_B \XX)\Bigr)\ .\]
     Since $A^\ast_{hom}(\PP^5\times\PP^5)=0$, we have
     \[ (\delta\vert_{X_b\times X_b})_\ast A^\ast_{hom}(X_b)=0\ .\]
     For $b\in B$ general, the fibre $X_b\times X_b$ will be in general position with respect to the $\VV_i$ and $\WW_i$ and so
     \[ \dim(\VV_i\cap X_b)+\dim(\WW_i\cap X_b) =4\ \ \forall i\ ,\]
     which ensures that
     \[ (\gamma\vert_{X_b\times X_b})_\ast A^\ast_{hom}(X_b)=0\ .\]
    Plugging in the definition of $R$ into the rational equivalence (\ref{rateq}), this means that
     \[ (\Psi\vert_{X_b\times X_b})_\ast (\Gamma\vert_{X_b\times X_b})_\ast =\ide\colon\ \ A^\ast_{hom}(X_b)\ \to\ A^\ast_{hom}(X_b)\ \ \ \hbox{for}\ b\in B\ \hbox{general}\ ,\]
     which proves theorem \ref{main} for $b\in B$ general.
     
     To prove theorem \ref{main} for any given $b_0\in B$, we note that the above construction can also be made locally around the point $b_0$: in the construction of lemma \ref{allrel}, we throw away all the data $M_j$ for which the subvarieties $\VV_{i,j}, \WW_{i,j}$ are {\em not\/} all in general position with respect to $X_{b_0}\times X_{b_0}$. The union of the remaining $M_j$ will dominate an open $B^\prime\subset B$ containing $b_0$, and so the above proof works for the cubic $X_{b_0}$.     
    
 To end the proof, it remains to verify the hypotheses of theorem \ref{voisin2} (which we applied above) are met with. This is the content of the following:
 
 \begin{proposition}\label{OK} Let $\XX\to B$ be the family of smooth cubic fourfolds as in theorem \ref{main}, i.e.
    \[ B\ \subset\ \Bigl(\PP H^0\bigl(\PP^5,\OO_{\PP^5}(3)\bigr)\Bigr)^G \]
    is the open subset parametrizing smooth $G$--invariant cubics, and $G=\{id,\iota\}\subset\aut(\PP^5)$ as above. This set--up verifies the hypotheses of proposition \ref{voisin2}.
    \end{proposition}
  
  \begin{proof} Let us first prove hypothesis (\rom1) of proposition \ref{voisin2} is satisfied. 
  
  To this end, we consider the tower of morphisms
   \[ p\colon\ \ \PP^5\ \xrightarrow{p_1}\ P^\prime:= \PP^5/G\ \xrightarrow{p_2}\ P:=\PP(1^4,2^2)\ ,\]
   where $\PP(1^4,2^2)=\PP^5/(\ZZ/2\ZZ\times \ZZ/2\ZZ)$ denotes a weighted projective space. Let us write $\iota_4, \iota_5$ for the involutions of $\PP^5$ 
   \[  \begin{split}  &\iota_4 [x_0:\ldots:x_5] = [x_0:x_1:x_2:x_3:-x_4:x_5]\ ,\\
                            &\iota_5 [x_0:\ldots:x_5] = [x_0:x_1:x_2:x_3:x_4:-x_5]\ .\\
                        \end{split} \]    
   (We note that $\iota=\iota_4\circ\iota_5$, and $P=\PP^5/ <\iota_4,\iota_5>$.)
   
   The sections in $\bigl(\PP H^0\bigl(\PP^5,\OO_{\PP^5}(3)\bigr)\bigr)^G$ are in bijection with $ \PP H^0\bigl(P^\prime,\OO_{P^\prime}(3)\bigr) $, and so there is an inclusion
   \[  \Bigl(\PP H^0\bigl(\PP^5,\OO_{\PP^5}(3)\bigr)\Bigr)^G \ \subset\ p^\ast \PP H^0\bigl( P,\OO_P(3)\bigr)\ .\]  
   
   Let us now assume $x,y\in\PP^5$ are two points such that
   \[ (x,y)\not\in \Delta_{\PP^5}\cup \Gamma_{\iota_4}\cup \Gamma_{\iota_5}\cup \Gamma_\iota\ .\]
   Then 
   \[ p(x)\not=p(y)\ \ \ \hbox{in}\ P\ ,\]
   and so (using lemma \ref{delorme} below) there exists $\sigma\in\PP H^0\bigl(P^\prime,\OO_{P^\prime}(3)\bigr)$ containing $p(x)$ but not $p(y)$. The pullback $p^\ast(\sigma)$ contains $x$ but not $y$, and so these points $(x,y)$ impose $2$ independent conditions on   $ \bigl(\PP H^0\bigl(\PP^5,\OO_{\PP^5}(3)\bigr)\bigr)^G$.
   
   It only remains to check that a generic element $(x,y)\in\Gamma_{\iota_4}\cup\Gamma_{\iota_5}$ also imposes $2$ independent conditions. Let us assume $(x,y)$ is generic on $\Gamma_4$ (the argument for $\Gamma_5$ is only notationally different). Let us write $x=[a_0:a_1:\ldots:a_5]$. By genericity, we may assume all $a_i$ are $\not=0$ (intersections of $\Gamma_4$ with a coordinate hyperplane have codimension $>n+1$ and so need not be considered for hypothesis (\rom1) of proposition \ref{voisin2}). We can thus write
   \[ x=[1:a_1:a_2:a_3:a_4:a_5]\ ,\ \ \ y= [1:a_1:a_2:a_3:-a_4:a_5] \ ,\ \ \ a_i\not=0 \ .\] 
   The cubic 
   \[  a_5 x_0(x_4)^2 - a_4x_0x_4x_5=0 \]
   is $G$--invariant and contains $x$ while avoiding $y$. This proves hypothesis (\rom1) is satisfied.
   
  To establish hypothesis (\rom2) of proposition \ref{voisin2}, we proceed by contradiction. Let us suppose hypothesis (\rom2) is not met with, i.e. there exists a smooth cubic $X_b$ as in theorem \ref{main}, and a non--trivial relation
  \[  c\,\Delta_{X_b} +d\, \Gamma_{\iota_{X_b}} +\delta =0\ \ \ \hbox{in}\ H^8(X_b\times X_b)\ ,\]
  where $c,d\in \QQ^\ast$ and $\delta\in\ima\bigl( A^4(\PP^5\times\PP^5)\to A^4(X_b\times X_b)\bigr)$.
  Looking at the action on $H^{3,1}(X_b)$, we find that necessarily $c=-d$ (indeed, $\delta$ does not act on $H^{3,1}(X_b)$, and $\iota$ acts as the identity on $H^{3,1}(X_b)$).  
   That is, we would have a relation
   \[ \Delta_{X_b} -\Gamma_{\iota_{X_b}} +{1\over c}\ \delta =0\ \ \ \hbox{in}\ H^8(X_b\times X_b)\ .\]
   Looking at the action on $H^{2,2}(X_b)$, we find that
    \[ (\iota_{X_b})^\ast =\ide\colon\ \ \ \gr^2_F H^4(X_b,\C)_{\rm prim}\ \to\   \gr^2_F H^4(X_b,\C)_{\rm prim}\ .\]
    Since there is a codimension $2$ linear subspace in $\PP^5$ fixed by $\iota$, it follows that actually
    \[ (\iota_{X_b})^\ast =\ide\colon\ \ \ \gr^2_F H^4(X_b,\C)_{}\ \to\   \gr^2_F H^4(X_b,\C)_{}\ .\]
    Consider now the Fano variety of lines $F=F(X_b)$ with the involution $\iota_F$. Using the Beauville--Donagi isomorphism \cite{BD}, one obtains that also
    \[ (\iota_{F})^\ast =\ide\colon\ \ \ \gr^1_F H^2(F,\C)_{}\ \to\   \gr^1_F H^2(F,\C)_{}\ .\]

    As $\dim  \gr^1_F H^2(F,\C)_{}=21$, this would imply that the trace of $(\iota_{F})^\ast$ on $ \gr^1_F H^2(F,\C)_{}$ is $21$.
    However, this contradicts proposition \ref{chiara} below, and so hypothesis (\rom2) must be satisfied.  
  
  \begin{lemma}\label{delorme} Let $P=\PP(1^4,2^2)$. Let $r,s\in P$ and $r\not=s$. Then there exists $\sigma\in\PP H^0\bigl(P,\OO_P(3)\bigr)$ containing $r$ but avoiding 
  $s$.
  \end{lemma}
  
  \begin{proof} It follows from Delorme's work \cite[Proposition 2.3(\rom3)]{Del} that the locally free sheaf $\OO_P(2)$ is very ample. This means there exists $\sigma^\prime\in\PP H^0\bigl(P,\OO_P(2)\bigr)$ containing $r$ but avoiding 
  $s$. Taking the union of $\sigma^\prime$ with a hyperplane avoiding $s$, one obtains $\sigma$ as required.
    \end{proof}
  
\begin{proposition}[Camere \cite{Cam}]\label{chiara} Let $X_b\subset\PP^5$ be a cubic as in theorem \ref{main}, and let $\iota_{X_b}$ be the involution as above. Let $F=F(X_b)$ be the Fano variety of lines, and let $\iota_F$ be the involution of $F$ induced by $\iota_{X_b}$. The trace of $(\iota_{F})^\ast$ on the
$21$--dimensional vector space $\gr^1_F H^2(F,\C)$ is $5$.
\end{proposition}  

\begin{proof} This follows from \cite[Theorem 5]{Cam}.
  \end{proof}

   \end{proof}
   \end{proof}

\begin{remark} Let $X$ and $S$ be as in theorem \ref{main}. One expects there is actually an isomorphism
  \[ \Gamma_\ast\colon\ \ \ A^3_{hom}(X)\ \xrightarrow{\cong}\ A^2_{hom}(S)\ .\]
  I am unsure whether the argument of theorem \ref{main} can also be used to prove surjectivity.
  \end{remark}

\begin{remark} To find the $K3$ surface $S$ of theorem \ref{main}, we have used the existence of the symplectic involution $\iota_F$ on the Fano variety $F=F(X)$ of lines on the cubic fourfold $X$, for which $S\subset F$ is in the fixed locus. One could ask if there exist cubic fourfolds $X$ other than those of theorem \ref{main}, such that the Fano variety $F(X)$ has a symplectic automorphism with a $2$--dimensional component in the fixed locus.

However, if one restricts to {\em polarized\/} symplectic automorphisms of $F(X)$, there are only $2$ families with a surface in the fixed locus: the family of theorem \ref{main}, and a family with an abelian surface in the fixed locus. This follows from the classification obtained by L. Fu in \cite[Theorem 0.1]{LFu3} (the first family is labelled ``Family V-(1)'', and the second family is labelled ``Family IV-(2)'' in loc. cit.).

The second family (with an abelian surface in the fixed locus) is studied from the point of view of algebraic cycles in \cite{voisinHK}.
\end{remark}

\begin{remark} Let $X$ and $F$ be as in theorem \ref{main}. We mention in passing that the automorphisms $\iota$ and $\iota_F$ of $X$ resp. of $F$ act as the identity on 
$A^3(X)$, resp. on $A^4(F)$ (for $X$, this follows immediately from theorem \ref{main}). 

This is proven more generally for any polarized symplectic automorphism of the Fano variety of lines of a cubic fourfold \cite[Theorems 0.5 and 0.6]{LFu2} (for a slightly different take on this, cf. \cite[Theorem 5.3]{SV}). The argument of \cite{LFu2} is (just like the argument proving theorem \ref{main}) based on the idea of spread of algebraic cycles in a family, inspired by \cite{V0}, \cite{V1}.
\end{remark}



\section{Finite--dimensionality}

\begin{corollary}\label{findim} Let $X\subset\PP^5(\C)$ be a smooth cubic fourfold defined by an equation
  \[  f(x_0,x_1,x_2,x_3) + (x_4)^2\ell_1(x_0,\ldots,x_3) + (x_5)^2\ell_2(x_0,\ldots,x_3)+x_4x_5\ell_3(x_0,\ldots,x_3)=0\ .\]
  Assume
   \[ \dim H^4(X)\cap H^{2,2}(X,\C)\ge 20 \ .\]
   Then $X$ has finite--dimensional motive.
   \end{corollary}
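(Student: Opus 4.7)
The strategy is to combine theorem \ref{main} with known finite-dimensionality results for K3 surfaces of high Picard rank. First, I would translate the hypothesis into a statement about the K3 surface $S$ produced by theorem \ref{main}. A smooth cubic fourfold has $\dim H^4(X,\C) = 23$ with Hodge numbers $h^{3,1} = h^{1,3} = 1$ and $h^{2,2} = 21$, so the assumption $\dim H^4(X) \cap H^{2,2}(X,\C) \geq 20$ forces the transcendental part $H^4_{tr}(X)$ (the orthogonal complement of the algebraic classes in $H^4(X,\QQ)$) to satisfy $\dim H^4_{tr}(X) \leq 3$. On the other hand, the proof of theorem \ref{main} exhibits an isomorphism of Hodge structures $H^4(X)/N^2 \xrightarrow{\cong} H^2_{tr}(S)$, so $\dim H^2_{tr}(S) \leq 3$ and hence $\rho(S) \geq 19$.

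Second, I would invoke the fact that a K3 surface with Picard number $\rho \geq 19$ has finite-dimensional Chow motive. This follows from work of Pedrini (building on Morrison's theorem that such K3 surfaces admit a Shioda--Inose structure) combined with Kimura's finite-dimensionality of motives of abelian surfaces. Thus $h(S)\in\MM_{\rm rat}$ is finite-dimensional.

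Third, I would upgrade theorem \ref{main} from a split injection of Chow groups to a direct-summand relationship of Chow motives. The proof of theorem \ref{main} in fact establishes an equality $\Psi\circ\Gamma = \Delta_X + \gamma - \delta$ in $A^4(X\times X)$, where $\gamma$ is supported on products of subvarieties of complementary dimensions and $\delta$ is pulled back from $\PP^5\times\PP^5$. Combined with theorem \ref{vial} and the observation that a hypersurface $X\subset\PP^5$ has only $\pi^X_4$ as a nontrivial middle K\"unneth component, this should yield a decomposition $h(X)= t(X)\oplus(\text{Lefschetz motives})$ in $\MM_{\rm rat}$, in which the transcendental summand $t(X)$ sits as a direct factor of $h(S)\oplus\bigoplus_i \LLL^{n_i}$. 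Since direct summands of finite-dimensional motives are finite-dimensional, $t(X)$ and hence $h(X)$ are finite-dimensional.

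The main obstacle is this last step: lifting the decomposition from homological to rational equivalence. Theorem \ref{vial} only provides the refined K\"unneth projectors modulo homological equivalence, while the proof of theorem \ref{main} controls correspondences modulo rational equivalence. Bridging the two requires either applying the nilpotence of homologically trivial correspondences acting on a finite-dimensional target motive, or retracing the spreading argument of theorem \ref{main} carefully enough to obtain the motivic (rather than Chow-group-level) direct-summand statement.
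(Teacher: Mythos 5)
Your proposal follows essentially the same route as the paper: deduce $\rho(S)\ge 19$ from the hypothesis via the isomorphism $H^4(X)/N^2\cong H^2_{tr}(S)$, invoke Pedrini's finite--dimensionality for such $K3$ surfaces, and conclude via a direct--summand inclusion $h(X)\subset h(S)(1)\oplus\bigoplus_j\LLL(m_j)$ in $\MM_{\rm rat}$ extracted from the proof of theorem \ref{main}. The ``obstacle'' you flag at the end is handled exactly as you suggest: the spread argument in theorem \ref{main} already yields the relation $\Psi\circ\Gamma=\Delta_X+\gamma-\delta$ modulo \emph{rational} equivalence, so no separate nilpotence step is needed.
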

   
 \begin{proof} It follows from (the proof of) theorem \ref{main} there is an inclusion as direct summand
   \begin{equation}\label{inclu} h(X)\ \subset\ h(S)(1)\oplus\bigoplus_j \LLL(m_j)\ \ \ \hbox{in}\ \MM_{\rm rat}\ ,\end{equation}
   where $S$ is a $K3$ surface. We have also seen (in the proof of theorem \ref{main}) there is an isomorphism
   \[ \Gamma_\ast\colon\ \ \ H^4(X)/N^2\ \xrightarrow{\cong}\ H^2_{tr}(S)\ .\]
   Since the Hodge conjecture is known for $X$ (because $X$ is Fano), there is equality
   \[ N^2 H^4(X)=H^4(X)\cap H^{2,2}(X,\C)\ .\]
   Thus, the hypothesis on the dimension of the space of Hodge classes implies that 
     \[ \dim N^2 H^4(X)\ge 20\ ,\] 
     and so
   \[ \dim H^2_{tr}(S) = \dim (  H^4(X)/N^2 ) = 23-\dim N^2\le 3\ .\]
   This implies the Picard number $\rho(S)$ is at least $19$, and so $S$ has finite--dimensional motive \cite{Ped}. In view of inclusion (\ref{inclu}), this concludes the proof.
   \end{proof}

 \begin{remark}\label{fanotoo} Let $X$ be a cubic as in corollary \ref{findim}. Applying \cite{fano}, it follows that the Fano variety of lines $F:=F(X)$ also has finite--dimensional motive.
 \end{remark}

\vskip1cm
\begin{nonumberingt} Thanks to all participants of the Strasbourg 2014/2015 ``groupe de travail'' based on the monograph \cite{Vo} for a very pleasant atmosphere.
Many thanks to Kai and Len and Yoyo for stimulating discussions not related to this work.
\end{nonumberingt}

\vskip1cm

\end{document}